\newtheorem{theorem}{Theorem}
\newtheorem{lemma}[theorem]{Lemma}
\newtheorem{conjecture}[theorem]{Conjecture}
\theoremstyle{definition}
\DeclareMathOperator{\Bin}{Bin}
\newcommand{\p}{\mathcal{P}}
\newcommand{\A}{\mathcal{A}}
\newcommand{\B}{\mathcal{B}}
\newcommand{\C}{\mathcal{C}}
\newcommand{\D}{\mathcal{D}}
\newcommand{\E}{\mathcal{E}}
\newcommand{\cI}{\mathcal{I}}
\begin{document}
\title{An isoperimetric inequality for antipodal subsets of the discrete cube}
\author{David Ellis and Imre Leader}
\date{14th September 2016}
\maketitle
\begin{abstract}
We say a family of subsets of $\{1,2,\ldots,n\}$ is {\em antipodal} if it is closed under taking complements. We prove a best-possible isoperimetric inequality for antipodal families of subsets of $\{1,2,\ldots,n\}$ (of any size). Our inequality implies that for any $k \in \mathbb{N}$, among all such families of size $2^k$, a family consisting of the union of two antipodal $(k-1)$-dimensional subcubes has the smallest possible edge boundary.
\end{abstract}

\section{Introduction}
Isoperimetric questions are classical objects of study in mathematics. In general, they ask for the minimum possible `boundary-size'
of a set of a given `size', where the exact meaning of these words
varies according to the problem.

The classical isoperimetric
problem in the plane asks for the minimum possible perimeter of a
shape in the plane with area 1. The answer, that it is best to take a
circle, was `known' to the ancient Greeks, but it was not until the
19th century that this was proved rigorously, by Weierstrass in a series of lectures in the
1870s in Berlin.

The isoperimetric problem has been solved for $n$-dimensional Euclidean space $\mathbb{E}^n$, for the $n$-dimensional unit sphere $\mathbb{S}^n : = \{x \in \mathbb{R}^{n+1}:\ \sum_{i=1}^{n+1}x_i^2 = 1\}$, and for $n$-dimensional hyperbolic space $\mathbb{H}^n$ (for all $n$), with the natural notion of boundary in each case, corresponding to surface area for sufficiently `nice' sets. (For background on isoperimetric problems, we refer the reader to the book of Burago and Zalgaller \cite{burago}, the surveys of Osserman \cite{osserman} and of Ros \cite{ros-survey}, and the references therein.) One of the most well-known open problems in the area is to solve the isoperimetric problem for $n$-dimensional real projective space $\mathbb{RP}^n$, or equivalently for antipodal subsets of the $n$-dimensional sphere $\mathbb{S}^n$. (We say a subset $\mathcal{A} \subseteq \mathbb{S}^n$ is {\em antipodal} if $\mathcal{A} = -\mathcal{A}$.) The conjecture can be stated as follows.

\begin{conjecture}
\label{conj:proj}
Let $n \in \mathbb{N}$ with $n \geq 2$, and let $\mu$ denote the $n$-dimensional Hausdorff measure on $\mathbb{S}^n$. Let $\A \subseteq \mathbb{S}^n$ be open and antipodal. Then there exists a set $\mathcal{B} \subseteq \mathbb{S}^n$ such that $\mu(\B) = \mu(\A)$, $\sigma(\B) \leq \sigma(\A)$, and
$$\mathcal{B} = \{x \in \mathbb{S}^n:\ \sum_{i=1}^{r} x_i^2 > a\}$$
for some $r \in [n]$ and some $a \in \mathbb{R}$.
\end{conjecture}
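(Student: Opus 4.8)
The plan is to attack the conjecture along the lines that settle the ordinary isoperimetric problem on $\mathbb{S}^n$ --- existence and regularity of a minimiser, symmetrisation, and then the analysis of the resulting symmetric critical hypersurface --- but carried out entirely within the class of antipodal sets. First I would fix $\mu(\A)$ and seek an antipodal $\B$ with $\mu(\B) = \mu(\A)$ and $\sigma(\B)$ minimal. Since antipodality is a closed constraint, the usual compactness and lower-semicontinuity theory for sets of finite perimeter produces such a minimiser $\B$, whose reduced boundary is a smooth embedded hypersurface of constant mean curvature away from a singular set of codimension at least $7$.

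The heart of any such argument is a symmetrisation step, and here the key point is to use two-point symmetrisation (polarisation) only with respect to hyperplanes $H$ \emph{through the origin}. For such $H$ the reflection $\sigma_H$ commutes with the antipodal map $x \mapsto -x$, so polarisation sends antipodal sets to antipodal sets; and, as in the classical setting, it preserves $\mu$ and does not increase $\sigma$. Polarising in the coordinate hyperplanes $\{x_i = 0\}$ and the diagonal hyperplanes $\{x_i = x_j\}$ (all great hyperspheres), I would aim to drive $\B$ to a set invariant under the hyperoctahedral group of signed coordinate permutations, hence depending only on the multiset $\{x_1^2, \dots, x_{n+1}^2\}$. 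Combined with the constant-mean-curvature condition this forces $\partial \B$ to be an isoparametric hypersurface; the super-level sets
$$\{x \in \mathbb{S}^n : \sum_{i=1}^{r} x_i^2 > a\}$$
are exactly the symmetric isoparametric regions, and a one-parameter variational comparison would then select, for each value of $\mu(\A)$, the optimal codimension-determining $r \in [n]$ and threshold $a$.

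The main obstacle --- and the reason the conjecture is still open --- lies precisely in the symmetrisation and classification steps. For the ordinary problem one symmetrises towards a single point and every set flows to a cap; but the antipodal constraint admits a whole ladder of competing ``centres'', namely the coordinate subspheres $\mathbb{S}^{r-1}$ of each dimension, and there is no single rearrangement that is simultaneously monotone towards all of them while decreasing perimeter. Thus symmetrisation does not by itself determine the optimal $r$: it can stall at disconnected unions of concentric tubes, and in low codimension the sphere carries exotic (Cartan, Ferus--Karcher--M\"unzner) isoparametric families that are compatible with the hyperoctahedral symmetry and must be excluded by a delicate second-variation (stability) analysis. I expect this exclusion, together with proving that rearrangement towards $\mathbb{S}^{r-1}$ genuinely lowers $\sigma$ for $r \ge 2$, to be the crux.

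A complementary route, natural given the discrete theorem of this paper, would be to deduce the conjecture by a limiting argument: antipodal families in $\{0,1\}^n$ model antipodal subsets of $\mathbb{S}^{n-1}$, and the extremal unions of antipodal subcubes should, under the central-limit scaling $x \mapsto x/\sqrt{n}$, converge to the tubes above. The difficulty here is the usual one of passing from edge boundary to surface area in the limit while keeping track of which discrete extremal configuration corresponds to which value of $r$ --- a correspondence that the antipodal symmetry makes considerably more subtle than in the classical cube-to-Gauss-to-sphere chain.
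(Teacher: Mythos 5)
This statement is Conjecture~\ref{conj:proj}, which the paper does not prove: it is stated explicitly as an open problem, with only the cases $n=2$ and $n=3$ known (the latter due to Ritor\'e and Ros). Your submission is likewise not a proof but a research programme, and you say so yourself; so the honest verdict is that there is a genuine gap --- indeed the entire crux of the argument is missing, as you acknowledge.

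To be concrete about where the proposal fails as a proof. First, the symmetrisation step is incomplete in an essential way: two-point symmetrisation in great hyperspheres does preserve antipodality and volume and does not increase perimeter, but driving the set to hyperoctahedral symmetry only tells you that $\B$ is a union of level sets of the multiset $\{x_1^2,\dots,x_{n+1}^2\}$; it does not force $\partial\B$ to be a single coordinate tube, nor does it select the dimension $r$. Second, constant mean curvature plus hyperoctahedral invariance does not by itself imply that $\partial\B$ is isoparametric (CMC is one scalar condition; isoparametric means all principal curvatures are constant), and even within the isoparametric class the exclusion of the Cartan and Ferus--Karcher--M\"unzner families, and of disconnected unions of tubes, requires a stability analysis that is not supplied. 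Third, the variational comparison among the candidate regions $\{\sum_{i=1}^r x_i^2 > a\}$ for a fixed volume --- i.e.\ determining which $r$ wins --- is precisely the content of the conjecture and is nowhere carried out. Finally, the proposed limiting argument from Theorem~\ref{thm:ant} cannot close the gap either: edge boundary in $Q_n$ rescales to a Gaussian-type perimeter rather than spherical surface area, and the paper itself points out a structural obstruction --- the discrete extremal families are nested, one for each size, whereas the conjectural spherical extremals for different volumes and different $r$ are not nested --- so the discrete result genuinely sheds only limited light on the continuous conjecture.
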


Here, if $\mathcal{A} \subseteq \mathbb{S}^n$ is an open set, then $\sigma(\mathcal{A})$ denotes the surface area of $\mathcal{A}$, i.e.\ the $(n-1)$-dimensional Hausdorff measure of the topological boundary of $\mathcal{A}$.

Only the cases $n=2$ and $n=3$ of Conjecture \ref{conj:proj} are known, the former being `folklore' and the latter being due to Ritor\'e and Ros \cite{ros}. In this paper, we prove a discrete analogue of Conjecture \ref{conj:proj}.

First for some definitions and notation. If $X$ is a set, we write $\mathcal{P}(X)$ for the power-set of $X$. For $n \in \mathbb{N}$, we write $[n]: = \{1,2,\ldots,n\}$, and we let $Q_n$ denote the graph of the $n$-dimensional discrete cube, i.e.\ the graph with vertex-set $\p([n])$, where $x$ and $y$ are joined by an edge if $|x \Delta y|=1$. If $\A \subseteq \p([n])$, we write $\partial \A$ for the {\em edge-boundary} of $\A$ in the discrete cube $Q_n$, i.e.\ $\partial \A$ is the set of edges of $Q_n$ which join a vertex in $\A$ to a vertex outside $\A$. We write $e(\A)$ for the number of edges of $Q_n$ which have both end-vertices in $\A$. We say that two families $\A,\B \subseteq \p([n])$ are {\em isomorphic} if there exists an automorphism $\sigma$ of $Q_n$ such that $\mathcal{B} = \sigma(\mathcal{A})$. Clearly, if $\mathcal{A}$ and $\mathcal{B}$ are isomorphic, then $|\partial \A| = |\partial \B|$.

The {\em binary ordering} on $\p([n])$ is defined by $x < y$ iff $\max(x \Delta y) \in y$. An {\em initial segment of the binary ordering on $\p([n])$} is the set of the first $k$ (smallest) elements of $\p([n])$ in the binary ordering, for some $k \leq 2^n$. For any $k \leq 2^n$, we write $\cI_{n,k}$ for the initial segment of the binary ordering on $\p([n])$ with size $k$.

Harper \cite{harper}, Lindsay \cite{lindsay}, Bernstein \cite{bernstein} and Hart \cite{hart} solved the edge isoperimetric problem for $Q_n$, showing that among all subsets of $\p([n])$ of given size, initial segments of the binary ordering on $\p([n])$ have the smallest possible edge-boundary. 

In this paper, we consider the edge isoperimetric problem for antipodal sets in $Q_n$. If $x \subseteq [n]$, we define $\overline{x}:= [n] \setminus x$, and if $\A \subseteq \p([n])$, we define $\overline{\A} := \{\overline{x}:\ x \in \A\}$. We say a family $\A \subseteq \p([n])$ is {\em antipodal} if $\A = \overline{\A}$. This notion is of course the natural analogue in the discrete cube of antipodality in $\mathbb{S}^n$; indeed, identifying $\p([n])$ with $\{-1,1\}^n \subseteq \sqrt{n} \cdot \mathbb{S}^{n-1} \subseteq \mathbb{R}^{n}$ in the natural way, $x \mapsto \overline{x}$ corresponds to the antipodal map $\mathbf{v} \mapsto -\mathbf{v}$.

We prove the following best-possible edge isoperimetric inequality for antipodal families.
\begin{theorem}
\label{thm:ant}
Let $\A \subseteq \p([n])$ be antipodal. Then
$$|\partial \A| \geq |\partial (\cI_{n,|\A|/2} \cup \overline{\cI_{n,|\A|/2}})|.$$
\end{theorem}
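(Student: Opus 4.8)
The plan is to convert the antipodal edge-isoperimetric problem into an ordinary (unconstrained) edge-isoperimetric problem one dimension down, by splitting off a coordinate. Fix a coordinate, say $n$, and for antipodal $\A$ write $\A_0=\{x\subseteq[n-1]:x\in\A\}$ and $\A_1=\{x\subseteq[n-1]:x\cup\{n\}\in\A\}$ for its two sections. The key structural observation is that antipodality of $\A$ is equivalent to $\A_1=\overline{\A_0}$ (complements taken inside $[n-1]$): indeed the map $x\mapsto\overline x$ on $[n]$ interchanges the two sections and acts on each as the antipodal map of $[n-1]$. Counting the edges of $Q_n$ in the three natural classes (inside each section, plus the edges in direction $n$) gives the identity $|\partial\A|=|\partial\A_0|+|\partial\A_1|+|\A_0\,\Delta\,\A_1|$ (the boundaries on the right being taken in $Q_{n-1}$), and using $\A_1=\overline{\A_0}$ together with the fact that $x\mapsto\overline x$ is an automorphism of $Q_{n-1}$ this becomes
\[
|\partial\A|=2\,|\partial\B|+|\B\,\Delta\,\overline\B|,\qquad \B:=\A_0\subseteq\p([n-1]),\ |\B|=|\A|/2 .
\]
Conversely every $\B\subseteq\p([n-1])$ arises this way, so the correspondence $\A\leftrightarrow\B$ is a bijection between antipodal families in $Q_n$ and arbitrary families in $Q_{n-1}$, under which the target family $\cI_{n,|\A|/2}\cup\overline{\cI_{n,|\A|/2}}$ corresponds to the initial segment $\cI_{n-1,|\A|/2}$ (since $n$ is the most significant coordinate, the smallest $|\A|/2$ sets all avoid $n$). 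The theorem is therefore equivalent to the claim that initial segments of the binary ordering minimise the functional $h(\B):=2\,|\partial\B|+|\B\,\Delta\,\overline\B|$ among all families of a given size.

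It is clean to reinterpret $h$ as an honest edge-boundary. Adjoining to $Q_{n-1}$ the perfect matching $\{\{x,\overline x\}:x\subseteq[n-1]\}$ of antipodal edges yields a graph $G$, and $h(\B)$ is exactly twice the edge-boundary of $\B$ in $G$; thus $|\partial\A|=2\,|\partial_G\B|$. The graph $G$ is the Cayley graph of $\mathbb{Z}_2^{\,n-1}$ with connection set $\{e_1,\dots,e_{n-1},\mathbf 1\}$, equivalently the folded cube $\square_n=Q_n/\langle\mathbf 1\rangle$; so the whole theorem is equivalent to an edge-isoperimetric inequality for the folded cube, with binary initial segments extremal. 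I would attack this by compression, the natural candidates being the $n-1$ codimension-one compressions inherited from $Q_{n-1}$, each of which cannot increase the first term $|\partial\B|$.

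The main obstacle, and the whole difficulty of the problem, is that such a compression can strictly \emph{increase} the second term $|\B\,\Delta\,\overline\B|$: pushing mass downward in a coordinate destroys the antipodal balance, because it does not commute with $x\mapsto\overline x$ (a single downward compression can turn an antipodal pair into two unbalanced sets and raise $h$). Consequently one cannot simply invoke the Harper--Lindsay--Bernstein--Hart theorem for the first term, nor use a downward compression as a black box. To get around this I would use compressions that treat the extra matching edge of $G$ on an equal footing with the cube directions and are symmetric under $x\mapsto\overline x$: work with the partition of $\p([n-1])$ into antipodal pairs together with two-dimensional faces, and apply on each antipodally-matched pair of faces a compression rule that is itself the antipodal image across the pair. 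Preserving antipodality is then automatic, and the assertion that such a compression never increases $h$ would be checked by a finite case analysis over the occupation patterns of a matched pair of faces (the only place a genuine computation is needed). Finally I would show that a family stable under all of these compressions must be an initial segment of the binary ordering, so that $h$ is minimised by $\cI_{n-1,|\A|/2}$; pulling back through the bijection gives $|\partial\A|\ge|\partial(\cI_{n,|\A|/2}\cup\overline{\cI_{n,|\A|/2}})|$, as required. The hard part is entirely in designing the compression so that both terms of $h$ move in the right direction at once.
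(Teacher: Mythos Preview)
Your reduction is exactly the paper's: splitting off coordinate $n$ and using $\A_1=\overline{\A_0}$ converts the antipodal problem to bounding $h(\B)=2|\partial\B|+|\B\Delta\overline\B|$ (equivalently, the paper's $f(\B)=2e(\B)+|\B\cap\overline\B|$, related by $h(\B)=2n|\B|-2f(\B)$) over \emph{arbitrary} $\B\subseteq\p([n-1])$, and your folded-cube interpretation of $h$ as twice an edge-boundary is correct and attractive.

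The gap is that you do not actually prove the reduced statement. You propose antipodally-symmetric compressions on matched pairs of faces, but you do not define them, do not carry out the promised finite case analysis, and do not show that families stable under them must be initial segments of the binary ordering --- you explicitly flag all of this as ``the hard part''. There is no evidence offered that such compressions exist with the required monotonicity in both terms of $h$ simultaneously; the obstacle you correctly identify (that ordinary codimension-one compressions can strictly increase $|\B\Delta\overline\B|$) is real, and asserting that a symmetric variant repairs it is not a proof. The paper avoids compressions entirely and proves $f(\B)\le 2F(|\B|)$ by direct induction on the dimension: one chooses a coordinate $i$ whose two $i$-sections $\C,\D$ of $\B$ are not too unbalanced (Lemma~\ref{lemma:balanced}), expands
\[
f(\B)=f(\C)+f(\D)+2|\C\cap\D|+2|\C\cap\overline\D|-|\C\cap\overline\C|-|\D\cap\overline\D|,
\]
applies the inductive hypothesis to $\C$ and to $\D$ (in its complementary form (\ref{eq:suff-large}) when a section exceeds half the cube), and then closes the arithmetic with Hart's inequality $F(x+y)\ge F(x)+F(y)+\min\{x,y\}$ (Lemmas~\ref{lemma:hart} and~\ref{lemma:hart-large}) together with the elementary set-theoretic bound $2|\C\cap\D|+2|\C\cap\overline\D|\le|\C\cap\overline\C|+|\D\cap\overline\D|+2\min\{|\C|,|\D|\}$ (Lemma~\ref{lemma:tech-ant}). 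No compression, case analysis on face-patterns, or stability characterisation is needed.
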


We remark that Theorem \ref{thm:ant} implies that if $\A \subseteq \p([n])$ is antipodal with $|\mathcal{A}| = 2^{k}$ for some $k \in [n-1]$, then $|\partial \A| \geq |\partial (\mathcal{S}_{k-1} \cup \overline{\mathcal{S}_{k-1}})|$, where $\mathcal{S}_{k-1} := \cI_{n,2^{k-1}} = \{x \subseteq [n]: x  \subseteq [k-1]\}$ is a $(k-1)$-dimensional subcube. In other words, a union of two antipodal subcubes has the smallest possible edge-boundary, over all antipodal sets of the same size.

To prove Theorem \ref{thm:ant}, it will be helpful for us to rephrase it slightly. Firstly, observe that for any $\A \subseteq \p([n])$, we have $\partial(\A^c) = \partial \A$, and that for any $k \leq 2^{n-1}$, the family $(\cI_{n,k} \cup \overline{\cI_{n,k}})^c$ is isomorphic to the family $\cI_{n,2^{n-1}-k} \cup \overline{\cI_{n,2^{n-1}-k}}$, via the isomorphism $x \mapsto x \Delta \{n\}$. Hence, by taking complements, it suffices to prove Theorem \ref{thm:ant} in the case $|\A| \leq 2^{n-1}$.

Secondly, for any family $\A \subseteq \p([n])$, we have
\begin{equation}\label{eq:internal-boundary} 2e(\A)+|\partial \A| = n|\A|,\end{equation}
so Theorem \ref{thm:ant} is equivalent to the statement that if $\A \subseteq \p([n])$ is antipodal, then
$$e(\A) \leq e(\cI_{n,|\A|/2} \cup \overline{\cI_{n,|\A|/2}}).$$
Note also that if $\mathcal{B}$ is an initial segment of the binary ordering on $\p([n])$ with $|\B| \leq 2^{n-2}$, then $\B \subseteq \{x \subseteq [n]:\ x \cap \{n-1,n\} = \emptyset\}$ and $\overline{\B} \subseteq \{x \subseteq [n]:\ \{n-1,n\} \subseteq x\}$, so $\B \cap \overline{\B} = \emptyset$ and $e(\B,\overline{\B}) = 0$. Moreover, it is easy to see that $\overline{\B}$ is isomorphic to $\B$, and therefore $e(\overline{\B}) = e(\B)$. Hence,
$$e(\B \cup \overline{\B}) = e(\B)+ e(\overline{\B}) = 2e(\B).$$
If $k,n \in \mathbb{N}$ with $k \leq 2^n$, we write $F(k) := e(\cI_{n,k})$. (It is easy to see that $F(k)$ is independent of $n$.) Putting all this together, we see that Theorem \ref{thm:ant} is equivalent to the following:
\begin{equation} \label{eq:aim} e(\A) \leq 2F(|\A|/2) \quad \forall \A \subseteq \p([n]):\ |\A| \leq 2^{n-1},\ \A \textrm{ is antipodal}.\end{equation} 

Now for a few words about our proof. In the special cases of $|\A| = 2^{n-1}$ and $|\A| = 2^{n-2}$, Theorem \ref{thm:ant} can be proved by an easy Fourier-analytic argument, but it is fairly obvious that this argument has no hope of proving the theorem for general set-sizes. Our proof of Theorem \ref{thm:ant} is purely combinatorial; we prove a stronger statement by induction on $n$. Our aim is to do induction on $n$ in the usual way: namely, by choosing some $i \in [n]$ and considering the upper and lower $i$-sections of $\mathcal{A}$, defined respectively by
$$\mathcal{A}_i^+ := \{x \in \p([n] \setminus \{i\}):\ x \cup \{i\} \in \mathcal{A}\},\quad \mathcal{A}_i^- := \{x \in \p([n] \setminus \{i\}):\ x \in \mathcal{A}\}.$$
However, a moment's thought shows that an $i$-section of an antipodal family need not be antipodal. For example, if $\mathcal{A} = \mathcal{S}_{k-1} \cup \overline{\mathcal{S}_{k-1}}$ (a union of two antipodal $(k-1)$-dimensional subcubes), then for any $i \geq k$, the $i$-section $\mathcal{A}_i^-$ consists of a single $(k-1)$-dimensional subcube, which is not an antipodal family. This rules out an inductive hypothesis involving antipodal families.

Hence, we seek a stronger statement, about arbitrary subsets of $\p([n])$; one which we can prove by induction on $n$, and which will imply Theorem \ref{thm:ant}. It turns out that the right statement is as follows. For any $\mathcal{A} \subseteq \p([n])$ (not necessarily antipodal), we define
$$f(\A) := 2e(\A) + |\A \cap \overline{\A}|.$$
To prove Theorem \ref{thm:ant}, it suffices to prove the following.
\begin{theorem}
\label{thm:suff}
For any $n \in \mathbb{N}$ and any $\A \subseteq \p([n])$ with $|\A| \leq 2^{n-1}$, we have
\begin{equation} \label{eq:suff} f(\A) \leq 2F(|\A|).\end{equation}
\end{theorem}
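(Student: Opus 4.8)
The plan is to prove Theorem \ref{thm:suff} by induction on $n$ (the cases $n \le 1$ being trivial), splitting $\A$ into its two sections in the last coordinate. Write $\B := \A_n^-$ and $\C := \A_n^+$, both regarded as subsets of $\p([n-1])$, and let $\overline{\B},\overline{\C}$ denote complementation within $[n-1]$. The first task is to express $f(\A)$ through the sections. The internal edges split in the usual way as $e(\A) = e(\B) + e(\C) + |\B \cap \C|$. The crucial observation is how the antipodal term behaves: since the global complement $x \mapsto [n]\setminus x$ interchanges the two $n$-sections while acting as complementation within $[n-1]$, a set $x$ with $n \notin x$ lies in $\A \cap \overline{\A}$ iff $x \in \B \cap \overline{\C}$, and dually for $n \in x$; counting both contributions gives $|\A \cap \overline{\A}| = 2|\B \cap \overline{\C}|$. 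Substituting $2e(\B) = f(\B) - |\B \cap \overline{\B}|$ and likewise for $\C$, I obtain the clean identity
\begin{equation*} f(\A) = f(\B) + f(\C) + R, \qquad R := 2|\B \cap \C| + 2|\B \cap \overline{\C}| - |\B \cap \overline{\B}| - |\C \cap \overline{\C}|. \end{equation*}

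Next I would invoke the inductive hypothesis on each section to get $f(\B) \le 2F(|\B|)$ and $f(\C) \le 2F(|\C|)$. One point needs care: only $|\A| = |\B| + |\C| \le 2^{n-1}$ is given, so one section might exceed $2^{n-2}$, where the inductive hypothesis does not directly apply. This is repaired by complementing within $[n-1]$: because $x \mapsto \overline{x}$ is an automorphism of $Q_{n-1}$ we have $f(\overline{\B}) = f(\B)$ while $|\overline{\B}| = 2^{n-1} - |\B| \le 2^{n-2}$, so the inductive hypothesis applied to $\overline{\B}$ yields $f(\B) \le 2F(2^{n-1}-|\B|) \le 2F(|\B|)$ (using that $F$ is nondecreasing). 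Hence $f(\B) \le 2F(|\B|)$ and $f(\C) \le 2F(|\C|)$ in all cases.

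Writing $p := |\B|$ and $q := |\C|$, it then remains to show $R \le 2\big(F(p+q) - F(p) - F(q)\big)$, which I would establish in two stages. First, a purely combinatorial bound $R \le 2\min(p,q)$: partition $\p([n-1])$ into antipodal pairs $\{x,\overline{x}\}$ and check that each pair $P$ contributes at most $2\min(|\B \cap P|,|\C \cap P|)$ to $R$ — a finite case-check over the $16$ possibilities for $(\B \cap P,\, \C \cap P)$, in which the penalty terms $-|\B\cap\overline{\B}|$ and $-|\C\cap\overline{\C}|$ exactly absorb the surplus coming from pairs fully contained in $\B$ or $\C$. Summing over $P$ and using $\sum_P \min(|\B\cap P|,|\C\cap P|) \le \min(p,q)$ gives $R \le 2\min(p,q)$. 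Second, I would apply the standard recursion underlying the edge-isoperimetric inequality on the cube, namely $F(p+q) \ge F(p) + F(q) + \min(p,q)$ (Hart's inequality, with equality when $p=q$, where $F(2p)=2F(p)+p$), to conclude $2\min(p,q) \le 2(F(p+q)-F(p)-F(q))$. Combining everything, $f(\A) \le 2F(p) + 2F(q) + 2\min(p,q) \le 2F(p+q) = 2F(|\A|)$.

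The main obstacle is this final step of marrying the antipodal cross-term with the edge-isoperimetric structure. The naive decoupling that bounds $e(\B) \le F(p)$ by the plain edge-isoperimetric inequality (discarding the self-antipodal terms) is genuinely too weak: it fails precisely when a section is itself antipodal, in which case $e(\B)$ is forced strictly below $F(p)$. It is therefore essential to carry the strengthened quantity $f$ through the induction and to let the per-pair penalty terms cancel the excess exactly. Verifying the per-pair inequality and checking that $\sum_P \min(|\B\cap P|,|\C\cap P|) \le \min(p,q)$ interfaces correctly with Hart's recursion is where the argument must be done with care; the remaining steps are bookkeeping.
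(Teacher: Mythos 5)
Your sectioning identity for $f(\A)$ and your bound $R \le 2\min(p,q)$ on the cross-term are exactly the paper's equation (\ref{eq:gen-coord}) and Lemma \ref{lemma:tech-ant}, and in the regime where both $n$-sections have size at most $2^{n-2}$ your argument coincides with the paper's Case 1. The gap is in your treatment of a section $\B$ with $|\B| > 2^{n-2}$. You assert that ``$f(\overline{\B}) = f(\B)$ while $|\overline{\B}| = 2^{n-1}-|\B|$'', but these two statements are about two different sets: the antipodal image $\overline{\B} = \{[n-1]\setminus x:\ x \in \B\}$ does satisfy $f(\overline{\B}) = f(\B)$ but has $|\overline{\B}| = |\B|$, while the set-theoretic complement $\B^c = \p([n-1])\setminus \B$ has $|\B^c| = 2^{n-1}-|\B|$ but $f(\B^c) = f(\B) + 2n(2^{n-2}-|\B|) \neq f(\B)$ (this is (\ref{eq:comp-f}) with $n-1$ in place of $n$). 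The conclusion you extract, namely $f(\B) \le 2F(|\B|)$ for $|\B| > 2^{n-2}$, is genuinely false: for $\B = \{\emptyset,\{1\},\{2\}\} \subseteq \p([2])$ one has $f(\B) = 2\cdot 2 + 2 = 6$ while $2F(3) = 4$. Moreover this $\B$ actually arises as a section of $\A = \{\emptyset,\{1\},\{2\},\{3\}\} \subseteq \p([3])$, a family for which \emph{every} coordinate has a section of size $3 > 2^{n-2}$, so the unbalanced case cannot be legislated away by choosing a better coordinate.

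What the complementation trick really gives for a large section is the weaker bound $f(\B) \le 2F(|\B|) + 2|\B| - 2^{n-1}$ (the paper's (\ref{eq:suff-large}), obtained by combining $f(\B^c) \le 2F(|\B^c|)$ with (\ref{eq:comp-f}) and (\ref{eq:comp-rel})), and the surplus $2|\B|-2^{n-1}$ must then be absorbed, which Hart's inequality $F(p+q)-F(p)-F(q) \ge \min(p,q)$ alone cannot do. The paper closes the induction with two further ingredients you are missing: Lemma \ref{lemma:balanced}, which guarantees a coordinate whose two sections differ in size by at most $2^{n-2}$ (so the large section is not \emph{too} large), and Lemma \ref{lemma:hart-large}, a strengthened form of Hart's inequality that supplies exactly the extra $|\D| - 2^{n-2}$ of slack needed in that balanced coordinate. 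Without a second case built on these three pieces, your induction does not close.
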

Indeed, assume that Theorem \ref{thm:suff} holds. Let $\mathcal{A} \subseteq \p([n])$ be antipodal with $|\A| \leq 2^{n-1}$. We have $\A_n^- = \overline{\A_n^+}$ and $|\A_n^+| = |\A|/2 \leq 2^{n-2}$, and so
\begin{align*} e(\A) & = e(\A_n^+)+e(\A_n^-) + |\A_n^+ \cap \A_n^-| = e(\A_n^+) + e(\overline{\A_n^+}) + |\A_n^+ \cap \overline{\A_n^+}|\\
& = 2e(\A_n^+) + |\A_n^+ \cap \overline{\A_n^+}| = f(\A_n^+) \leq 2F(|\A_n^+|) = 2F(|\A|/2),
\end{align*}
implying (\ref{eq:aim}) and so proving Theorem \ref{thm:ant}.

Note that the function $f$ takes the same value (namely, $k2^k$) when $\mathcal{A}$ is a $k$-dimensional subcube, as when $\mathcal{A}$ is the union of two antipodal $(k-1)$-dimensional subcubes. This is certainly needed in order for our inductive approach to work, by our above remark about the $i$-sections of the union of two antipodal subcubes.

We prove Theorem \ref{thm:suff} in the next section; in the rest of this section, we gather some additional facts we will use in our proof.

We will use the following lemma of Hart from \cite{hart}.
\begin{lemma}
\label{lemma:hart}
For any $x,y \in \mathbb{N} \cup \{0\}$, we have
$$F(x+y) - F(x) - F(y) \geq \min\{x,y\}.$$
Equality holds if $y$ is a power of 2 and $x \leq y$.
\end{lemma}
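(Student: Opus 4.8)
The plan is to exploit the fact, due to Harper, Lindsay, Bernstein and Hart (cited above), that $F(k) = e(\cI_{n,k})$ is the \emph{maximum} number of edges spanned by any $k$-element subset of a discrete cube, together with an explicit construction realising a set of size $x+y$ with many edges. I would assume throughout (without loss of generality, since $F$ is symmetric in $x$ and $y$) that $x \le y$, so that $\min\{x,y\} = x$.

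For the inequality I would take $N$ large — permissible since $F(k)$ does not depend on the dimension — and work in $Q_{N+1}$, writing $\p([N+1])$ as the disjoint union of the \emph{bottom} copy of $\p([N])$ (sets avoiding $N+1$) and the \emph{top} copy (sets containing $N+1$), joined by the perfect matching in direction $N+1$. I would place $A := \cI_{N,y}$ in the bottom and the lift $B := \{b \cup \{N+1\}:\ b \in \cI_{N,x}\}$ in the top, and consider $S := A \cup B$, a set of size $x+y$. The within-layer edges number $e(A) + e(B) = F(y) + F(x)$, while the cross edges in direction $N+1$ number exactly $x$: the key point is that $\cI_{N,x} \subseteq \cI_{N,y}$, since initial segments of the binary order are nested and $x \le y$, so every vertex of $B$ has its down-neighbour inside $A$. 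Hence $e(S) = F(x) + F(y) + x$, and since $e(S) \le F(x+y)$ by optimality of initial segments, the inequality follows.

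For the equality case, suppose $y = 2^m$ and $x \le 2^m$, so that $2^m \le x + 2^m \le 2^{m+1}$. Here I would use the standard \emph{splitting} recursion: for $2^m \le k \le 2^{m+1}$, the initial segment $\cI_{m+1,k}$ consists of the full bottom subcube $\{v:\ v \subseteq [m]\}$ (its first $2^m$ elements) together with an initial segment of size $k - 2^m$ in the top half, whence
\[ F(k) = F(2^m) + F(k - 2^m) + (k - 2^m), \]
the final term counting the matching edges, each top vertex's down-neighbour lying in the complete bottom subcube. Setting $k = x + 2^m$ gives $F(x + 2^m) = F(2^m) + F(x) + x$ exactly, which is the claimed equality. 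Conceptually this is the same construction as before: when $y$ is a power of two, $A = \cI_{N,y}$ is a genuine subcube, so stacking $B$ on top of it in a fresh direction reproduces (an isomorphic copy of) the optimal initial segment of size $x+y$, and no edges are lost.

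The main thing to get right is the cross-edge bookkeeping in the construction — specifically the use of the nesting $\cI_{N,x} \subseteq \cI_{N,y}$ to guarantee that all $x$ matching edges are present — together with the clean identification, in the power-of-two case, of the stacked set with an initial segment, so that the inequality is seen to be tight. Neither step is genuinely hard; the only external input is the edge-isoperimetric theorem for $Q_n$ quoted above, and the recursion used for the equality case is itself an instance of the same splitting idea.
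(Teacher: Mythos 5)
Your proof is correct. Note, however, that the paper itself does not prove this lemma at all: it is simply quoted as a result of Hart from \cite{hart}, so there is no internal argument to compare against. What you have supplied is a clean, self-contained derivation whose only external input is the Harper--Lindsey--Bernstein--Hart edge-isoperimetric theorem, which the paper does quote (via (\ref{eq:internal-boundary}), that theorem says initial segments of the binary ordering \emph{maximise} $e(\cdot)$ among sets of their size). Your two key steps both check out: the stacked set $S = \cI_{N,y} \cup \{b \cup \{N+1\} : b \in \cI_{N,x}\}$ has exactly $F(x)+F(y)+x$ internal edges because the nesting $\cI_{N,x} \subseteq \cI_{N,y}$ guarantees all $x$ matching edges are present, and $e(S) \le F(x+y)$ by extremality of initial segments; and for the equality case the splitting recursion $F(k) = F(2^m) + F(k-2^m) + (k-2^m)$ for $2^m \le k \le 2^{m+1}$ is exactly right for the binary ordering as defined in the paper (all sets avoiding $m+1$ precede all sets containing it, and the top half of an initial segment is the lift of an initial segment). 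The degenerate cases $x=0$ and $y=0$ are also covered. This is arguably more transparent than Hart's original route, which goes through an explicit formula for $F(k)$ in terms of the binary expansion of $k$; the only (mild) cost is that you invoke the full edge-isoperimetric theorem, whereas Hart's lemma can also be established directly from the formula without it.
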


We will also use the following easy consequence of Lemma \ref{lemma:hart}.
\begin{lemma}
\label{lemma:hart-large}
Let $x,y \in \mathbb{N} \cup \{0\}$ and let $n \in \mathbb{N}$ such that $x+y \leq 2^{n}$, $y \geq 2^{n-1}$ and $y \leq 2^{n-1} + x$. Then
$$F(x+y) - F(y) -F(x) - y + 2^{n-1} \geq x.$$
\end{lemma}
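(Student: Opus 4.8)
The plan is to reduce everything to two applications of Lemma \ref{lemma:hart}, exploiting crucially its equality case. Write $a := 2^{n-1}$, which is a power of $2$, and set $t := y - a \geq 0$ (using the hypothesis $y \geq 2^{n-1}$). The two remaining hypotheses then translate neatly: $y \leq 2^{n-1} + x$ becomes $t \leq x$, and $x + y \leq 2^{n}$ becomes $x + t \leq a$. Since $-y + 2^{n-1} = -t$, the target inequality is equivalent to
$$F(x+y) - F(y) - F(x) \geq x + t.$$

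The key observation is that both $y = a + t$ and $x+y = a + (x+t)$ are obtained by adding to the power of $2$ $a$ a quantity that is at most $a$, so the equality case of Lemma \ref{lemma:hart} applies to each. Setting $u := x + t \leq a$, we get $F(a + u) = F(a) + F(u) + u$, and, since likewise $t \leq u \leq a$, also $F(a + t) = F(a) + F(t) + t$. Subtracting these two identities and using $u - t = x$, the $F(a)$ terms cancel and we obtain
$$F(x+y) - F(y) = F(u) - F(t) + x.$$

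It then remains to bound $F(u) - F(t) - F(x) = F(x+t) - F(x) - F(t)$ from below, and here I would apply Lemma \ref{lemma:hart} a second time, now using only the inequality and not the equality case: since $t \leq x$, we have $F(x+t) - F(x) - F(t) \geq \min\{x,t\} = t$. Combining this with the previous display yields
$$F(x+y) - F(y) - F(x) = \bigl[F(x+t) - F(x) - F(t)\bigr] + x \geq t + x,$$
which is exactly what was needed.

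The argument is short, and there is no serious obstacle once one spots the reduction to $t = y - 2^{n-1}$ and recognizes that both relevant increments fall within the equality regime of Hart's lemma. The only point requiring care is verifying that the equality case genuinely applies in both instances, i.e.\ that the added amounts $u$ and $t$ do not exceed $a = 2^{n-1}$. This is precisely where the hypothesis $x + y \leq 2^{n}$ (equivalently $u = x+t \leq a$) is invoked, and it is the sole role that hypothesis plays.
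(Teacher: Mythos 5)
Your proof is correct and is essentially identical to the paper's: both substitute $z = y - 2^{n-1}$ (your $t$), apply the equality case of Lemma \ref{lemma:hart} twice with the power of $2$ being $2^{n-1}$, and finish with one application of the inequality $F(x+z)-F(x)-F(z) \geq \min\{x,z\} = z$. No differences worth noting.
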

\begin{proof}
Write $z: = y - 2^{n-1}$; then $z \leq x$ and $x+z \leq 2^{n-1}$. We therefore have
\begin{align*}
F(x+y) - F(y) -F(x) - y + 2^{n-1} & = F(2^{n-1} + x + z) - F(2^{n-1} + z) - F(x) - z\\
& = F(2^{n-1}) + F(x+z) + x + z\\
& \quad - F(2^{n-1}) - F(z) - z - F(x) - z\\
& = F(x+z)-F(x)-F(z) + x-z\\
& \geq \min\{x,z\} + x - z\\
& = x,
\end{align*}
where the last inequality uses Lemma \ref{lemma:hart}.
\end{proof}

We also need the following lemma, which says that for any family $\mathcal{A} \subseteq \p([n])$, one coordinate from every pair of coordinates is such that the upper and lower sections of $\mathcal{A}$ corresponding to that coordinate are `somewhat' close in size.

\begin{lemma}
\label{lemma:balanced}
Let $n \in \mathbb{N}$ with $n \geq 2$, and let $\A \subseteq \p([n])$. Then for any $1 \leq i < j \leq n$, we have
$$\min\{\left||\A_i^+| - |\A_i^-|\right|,\left||\A_j^+| - |\A_j^-|\right|\} \leq 2^{n-2}.$$
\end{lemma}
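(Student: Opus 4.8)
The plan is to turn the statement into an elementary bound on two signed quantities. For each coordinate $i$ I would write $d_i := |\A_i^+| - |\A_i^-|$, so that the lemma becomes the assertion that $\min\{|d_i|,|d_j|\} \leq 2^{n-2}$ for every $i<j$. The first observation to record is that $d_i$ is a signed count over $\A$: here $|\A_i^+|$ counts the members of $\A$ containing $i$ and $|\A_i^-|$ counts those not containing $i$, so $d_i$ is simply the number of sets in $\A$ containing $i$ minus the number not containing $i$.

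Next I would partition $\A$ according to the membership of the \emph{two} distinguished coordinates at once. For $\varepsilon,\delta \in \{+,-\}$ let $n_{\varepsilon\delta}$ denote the number of $x \in \A$ with $i \in x$ iff $\varepsilon = +$ and $j \in x$ iff $\delta = +$. Then $|\A_i^+| = n_{++}+n_{+-}$, $|\A_i^-| = n_{-+}+n_{--}$, and similarly for $j$, which yields the two clean identities $d_i + d_j = 2(n_{++}-n_{--})$ and $d_i - d_j = 2(n_{+-}-n_{-+})$. The crucial point is that each count $n_{\varepsilon\delta}$ lies in $\{0,1,\ldots,2^{n-2}\}$, since fixing the membership of $i$ and $j$ leaves $n-2$ free coordinates; hence any difference of two such counts has absolute value at most $2^{n-2}$, giving $|d_i+d_j| \leq 2^{n-1}$ and $|d_i-d_j| \leq 2^{n-1}$.

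To finish I would invoke the elementary identity $|d_i|+|d_j| = \max\{|d_i+d_j|,\,|d_i-d_j|\}$ (verified by the two sign cases), which together with the previous two bounds gives $|d_i|+|d_j| \leq 2^{n-1}$, and the desired conclusion $\min\{|d_i|,|d_j|\} \leq 2^{n-2}$ is then immediate. Equivalently, one can argue by contradiction: if both $|d_i|$ and $|d_j|$ exceeded $2^{n-2}$, then according to whether $d_i$ and $d_j$ share a sign, one of $|d_i+d_j|$ or $|d_i-d_j|$ would exceed $2^{n-1}$.

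I expect the argument to be short, so there is no genuine obstacle; the only thing to get right is the bookkeeping, namely to work with the signed differences $d_i$ and to split by the \emph{pair} $\{i,j\}$ rather than a single coordinate, so that the two cross-quantities $d_i \pm d_j$ both collapse to differences of box-sized counts. The sign casework in the final step is the one place where a careless treatment could slip.
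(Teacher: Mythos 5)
Your proof is correct and follows essentially the same route as the paper: both arguments split $\A$ into the four classes determined by membership of $i$ and $j$ and exploit the fact that each class has size at most $2^{n-2}$. The only difference is bookkeeping --- the paper normalises signs by a WLOG symmetry ($A \mapsto A \Delta S$ for $S \subseteq \{i,j\}$) and bounds one difference directly, whereas you keep the signed quantities and use the identity $|d_i|+|d_j| = \max\{|d_i+d_j|,|d_i-d_j|\}$, incidentally obtaining the marginally stronger bound $|d_i|+|d_j| \leq 2^{n-1}$.
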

\begin{proof}
Without loss of generality, by considering $\{A \Delta S:\ A \in \A\}$ for some $S \subseteq \{i,j\}$, we may assume that $|\A_i^+| \leq |\A_i^-|$ and that $|\A_j^+| \leq |\A_j^-|$. Interchanging $i$ and $j$ if necessary, we may assume that
$|(\A_{i}^-)_j^+| \geq |(\A_i^+)_j^-|$. Then we have
\begin{align*} 0 \leq |\A_j^-| - |\A_j^+| & = |(\A_i^-)_j^-| + |(\A_i^+)_j^-| - |(\A_i^-)_j^+| - |(\A_i^+)_j^+|\\
& \leq |(\A_i^-)_j^-| - |(\A_i)^+)_j^+|\\
& \leq |(\A_i^-)_j^-|\\
& \leq 2^{n-2},\end{align*}
proving the lemma.
\end{proof}

We also need the following.

\begin{lemma}
\label{lemma:tech-ant}
Let $n \in \mathbb{N}$ and let $\C,\D \subseteq \p([n])$. Then
\begin{equation}\label{eq:ineq2} 2|\C \cap \D| + 2|\C \cap \overline{\D}| \leq |\C \cap \overline{\C}| + |\D \cap \overline{\D}| + 2\min\{|\C|,|\D|\}.\end{equation}
\end{lemma}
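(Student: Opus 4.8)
The plan is to split $\p([n])$ into antipodal pairs and to handle each pair independently. Assume $n \geq 1$ (the case $n = 0$ is immediate), so that $x \neq \overline{x}$ for every $x$ and $\p([n])$ is the disjoint union of the $2^{n-1}$ two-element sets $P = \{x,\overline{x}\}$. For such a pair $P$, let $u_P \in \{0,1,2\}$ be the number of elements of $P$ lying in $\C$, and $v_P \in \{0,1,2\}$ the number lying in $\D$. Writing out the four memberships shows that the contribution of $P$ to $|\C \cap \D| + |\C \cap \overline{\D}|$ is exactly $u_P v_P$, that its contribution to $|\C \cap \overline{\C}|$ is $2$ if $u_P = 2$ and $0$ otherwise, and likewise that its contribution to $|\D \cap \overline{\D}|$ is $2$ if $v_P = 2$ and $0$ otherwise. (For the third of these, note that $x \in \C \cap \overline{\C}$ forces $\overline{x} \in \C$ too, so $\C \cap \overline{\C}$ meets $P$ precisely when $P \subseteq \C$, i.e.\ when $u_P = 2$.) Of course $|\C| = \sum_P u_P$ and $|\D| = \sum_P v_P$.

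Summing these per-pair contributions and dividing (\ref{eq:ineq2}) by $2$, the inequality becomes
\[ \sum_P u_P v_P \;\leq\; \sum_P \mathbf{1}[u_P = 2] \;+\; \sum_P \mathbf{1}[v_P = 2] \;+\; \min\{|\C|,|\D|\}, \]
where $\mathbf{1}[\,\cdot\,]$ is $1$ if its condition holds and $0$ otherwise. Next I would pin down the $\min$ term using symmetry. Both sides of (\ref{eq:ineq2}) are symmetric under swapping $\C$ and $\D$: the only non-obvious term is $|\C \cap \overline{\D}|$, and the bijection $x \mapsto \overline{x}$ carries $\C \cap \overline{\D}$ onto $\D \cap \overline{\C}$, so $|\C \cap \overline{\D}| = |\D \cap \overline{\C}|$. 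Hence I may assume $|\C| \leq |\D|$, in which case $\min\{|\C|,|\D|\} = |\C| = \sum_P u_P$, and it suffices to establish the \emph{per-pair} inequality
\[ u_P v_P \;\leq\; \mathbf{1}[u_P = 2] + \mathbf{1}[v_P = 2] + u_P, \qquad\text{equivalently}\qquad u_P(v_P - 1) \;\leq\; \mathbf{1}[u_P = 2] + \mathbf{1}[v_P = 2]. \]

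Finally I would verify this last inequality by inspecting the nine cases $u_P, v_P \in \{0,1,2\}$: for $u_P \leq 1$ the left side is at most $v_P - 1 \leq \mathbf{1}[v_P=2]$, while for $u_P = 2$ it reads $2(v_P - 1) \leq 1 + \mathbf{1}[v_P = 2]$, which again holds in each case. Summing the verified per-pair inequalities over all pairs $P$ then recovers the displayed reduction of (\ref{eq:ineq2}), proving the lemma. I do not expect a genuine obstacle here; the single point that must be got right is that the \emph{global} quantity $\min\{|\C|,|\D|\}$ can, after the symmetry reduction, be distributed across the pairs as the local terms $u_P$, which is exactly what makes the bound hold pairwise rather than only in aggregate.
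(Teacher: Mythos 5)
Your proof is correct, and it takes a genuinely different route from the paper's. The paper argues globally with two applications of inclusion--exclusion: it first writes $2|\C \cap \D| + 2|\C \cap \overline{\D}| = 2|\C \cap (\D \cup \overline{\D})| + 2|\C \cap (\D \cap \overline{\D})|$, bounds the first term by $2|\C|$, and then reduces to showing $2|\C \cap \E| \leq |\C \cap \overline{\C}| + |\E|$ for the antipodal set $\E = \D \cap \overline{\D}$, which follows from the identity $|\C \cap \E| + |\overline{\C} \cap \E| = |(\C \cap \overline{\C}) \cap \E| + |(\C \cup \overline{\C}) \cap \E|$. You instead localize: you partition $\p([n])$ into the $2^{n-1}$ antipodal pairs, check that every term in (\ref{eq:ineq2}) is a sum of per-pair contributions expressible in $(u_P, v_P)$, and verify a nine-case numerical inequality. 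All your bookkeeping is right (in particular the identity that $P$ contributes exactly $u_P v_P$ to $|\C \cap \D| + |\C \cap \overline{\D}|$, and the symmetry $|\C \cap \overline{\D}| = |\overline{\C} \cap \D|$ justifying the reduction to $|\C| \leq |\D|$). The paper's argument is a touch shorter and avoids case analysis; yours has the advantage of making the inequality pointwise on antipodal pairs, which in particular exhibits exactly where slack arises (only at pairs with $u_P = 2$, $v_P \leq 1$, or $u_P \leq 1$, $v_P = 0$, after the normalization $|\C| \leq |\D|$) and hence gives a clean description of the equality cases.
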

\begin{proof}
Note that both sides of the above inequality are invariant under interchanging $\C$ and $\D$, so it suffices to prove the lemma in the case $|\C| \leq |\D|$. By inclusion-exclusion, we have
$$2|\C \cap \D| + 2|\C \cap \overline{\D}| = 2|\C \cap (\D \cup \overline{\D})| + 2|\C \cap (\D \cap \overline{\D})| \leq 2|\C| + 2|\C \cap (\D \cap \overline{\D})|,$$
so it suffices to prove that
$$2|\C \cap (\D \cap \overline{\D})| \leq |\C \cap \overline{\C}| + |\D \cap \overline{\D}|.$$
Writing $\E = \D \cap \overline{\D}$, it suffices to prove that for any antipodal set $\E \subseteq \p([n])$, and any set $\C \subseteq \p([n])$, we have
$$2|\C \cap \E| \leq |\C \cap \overline{\C}| + |\E|.$$
This follows immediately from inclusion-exclusion again; indeed, we have
$$2|\C \cap \E| = |\C \cap \E| + |\C \cap \overline{\E}| = |\C \cap \E| + |\overline{\C} \cap \E| = |(\C \cap \overline{\C}) \cap \E| + |(\C \cup \overline{\C}) \cap \E| \leq |\C \cap \overline{\C}| + |\E|,$$
whenever $\E$ is antipodal.
\end{proof}

Finally, we note that for any $\A \subseteq \p([n])$, we have
\begin{align}
\label{eq:comp-f}
f(\A^c) & = 2e(\A^c) + |\A^c \cap \overline{\A^c}| = n|\A^c| - |\partial (\A^c)| +  |\A^c \cap \bar{\A}^c| \nonumber \\
&= n|\A| + n(|\A^c| - |\A|) - |\partial \A| + |(\A \cup \overline{\A})^c| \nonumber \\
& = n|\A| - |\partial \A| + n(2^n - 2|\A|)  + 2^n - |\A \cup \overline{\A}| \nonumber \\
& = 2e(\A) + n(2^n - 2|\A|) + 2^n - 2|\A| + |\A \cap \overline{\A}| \nonumber \\
& = f(\A)+  2(n+1) (2^{n-1} - |\A|).
\end{align}
Moreover, using (\ref{eq:internal-boundary}) and the fact that $(\cI_{n,k})^c$ is isomorphic to $\cI_{n,2^n-k}$, we have
\begin{align}
\label{eq:comp-rel}
2F(k) - 2F(2^n - k) & = kn - \partial(\cI_{n,k}) - \left((2^n-k)n - \partial((\cI_{n,k})^c)\right) \nonumber\\
& = kn - \partial(\cI_{n,k}) - \left((2^n-k)n - \partial(\cI_{n,k})\right) \nonumber \\
& = (2k-2^n)n
\end{align}
for any $k \leq 2^n$. It follows from (\ref{eq:comp-f}) and (\ref{eq:comp-rel}), by taking complements, that Theorem \ref{thm:suff} is equivalent to the inequality
\begin{equation} \label{eq:suff-large} f(\A) \leq 2F(|\A|) + 2|\A| - 2^n \quad \forall \A \subseteq \p([n]):\ |\A| \geq 2^{n-1}.\end{equation}

\section{Proof of Theorem \ref{thm:suff}}
Our proof is by induction on $n$. The base case $n=1$ of Theorem \ref{thm:suff} is easily checked. We turn to the induction step. Let $n \geq 2$, and assume that Theorem \ref{thm:suff} holds when $n$ is replaced by $n-1$. Let $\A \subseteq \p([n])$ with $|\A| \leq 2^{n-1}$. Observe that for any $i \in [n]$, we have
\begin{align}
\label{eq:gen-coord} f(\A) & = 2e(\A) + |\A \cap \overline{\A}|\nonumber\\
& = 2e(\A_i^+) + 2e(\A_i^-) + 2|\A_i^+ \cap \A_i^-| + |\A_i^+ \cap \overline{\A_i^-}| + |\A_i^- \cap \overline{\A_i^+}|\nonumber\\
& = 2e(\A_i^+) + 2e(\A_i^-) + 2|\A_i^+ \cap \A_i^-| + 2|\A_i^+ \cap \overline{\A_i^-}|.\end{align}

We now split into two cases.

{\em Case 1.} Firstly, suppose that there exists $i \in [n]$ such that $\max\{|\A_i^+|,|\A_i^-|\} \leq 2^{n-2}$. Without loss of generality, we may assume that this holds for $i=n$, i.e.\ that $\max\{|\A_n^+|,|\A_n^-|\} \leq 2^{n-2}$. We may also assume that $|\A_n^+| \leq |\A_n^-|$. Then, defining $\C : = \A_n^+ \subseteq \p([n-1])$ and $\D :=  \A_n^- \subseteq \p([n-1])$, and invoking (\ref{eq:gen-coord}) with $i=n$, we have
\begin{align}
\label{eq:master-1}
f(\A) & = 2e(\C) + 2e(\D) + 2|\C \cap \D| + 2|\C \cap \overline{\D}| \nonumber\\
& = f(\C) + f(\D) - |\C \cap \overline{\C}| - |\D \cap \overline{\D}| + 2|\C \cap \D| + 2|\C \cap \overline{\D}|.
\end{align}
We now apply the induction hypothesis to $\C$ and $\D$. Since $|\C| \leq |\D| \leq 2^{n-2}$, we may apply (\ref{eq:suff}), obtaining $f(\C) \leq 2F(|\C|)$ and $f(\D) \leq 2F(|\D|)$. Substituting the last two inequalities into (\ref{eq:master-1}), we obtain
\begin{align*}
f(\A) & \leq 2F(|\C|) + 2F(|\D|) + 2|\C \cap \D| + 2|\C \cap \overline{\D}| - |\C \cap \overline{\C}| - |\D \cap \overline{\D}|\\
& = 2F(|\C|+|\D|) - \big(2F(|\C|+|\D|) - 2F(|\C|) - 2F(|\D|)\big)\\
&\quad +2|\C \cap \D| + 2|\C \cap \overline{\D}| - |\C \cap \overline{\C}| - |\D \cap \overline{\D}|\\
& \leq 2F(|\A|) + 2|\C \cap \D| + 2|\C \cap \overline{\D}| - |\C \cap \overline{\C}| - |\D \cap \overline{\D}| - 2\min\{|\C|,|\D|\}\\
& = 2F(|\A|) + 2|\C \cap \D| + 2|\C \cap \overline{\D}| - |\C \cap \overline{\C}| - |\D \cap \overline{\D}| - 2|\C|\\
& \leq 2F(|\A|),
\end{align*}
where the second inequality follows from Lemma \ref{lemma:hart}, and the third inequality follows from Lemma \ref{lemma:tech-ant}. This completes the induction step in Case 1.

{\em Case 2.} Secondly, suppose that Case 1 does not occur, i.e.\ that $\max\{|\A_j^+|,|\A_j^-|\} > 2^{n-2}$ for all $j \in [n]$. By Lemma \ref{lemma:balanced}, there exists $i \in [n]$ such that $\left||\A_i^+| - |\A_i^-|\right| \leq 2^{n-2}$, and therefore 
$$2^{n-2} < \max\{|\A_i^+|,|\A_i^-|\} \leq \min\{|\A_i^+|,|\A_i^-|\} + 2^{n-2}.$$
Without loss of generality, we may assume that this holds for $i=n$, and that $|\A_n^+| \leq |\A_n^-|$, so that
$$2^{n-2} <  |\A_n^-| \leq 2^{n-2} + |\A_n^+|.$$
Defining $\C : = \A_n^+ \subseteq \p([n-1])$ and $\D :=  \A_n^- \subseteq \p([n-1])$ as before, and invoking (\ref{eq:gen-coord}) with $i=n$, we have
\begin{align}
\label{eq:master}
f(\A) & = 2e(\C) + 2e(\D) + 2|\C \cap \D| + 2|\C \cap \overline{\D}|\nonumber \\
& = f(\C) + f(\D) - |\C \cap \overline{\C}| - |\D \cap \overline{\D}| + 2|\C \cap \D| + 2|\C \cap \overline{\D}|.
\end{align}
Now, since $|\C| \leq |\D|$, we have $2|\C| \leq |\C|+|\D| = |\A| \leq 2^{n-1}$ and therefore $|\C| \leq 2^{n-2}$. On the other hand, we have $|\D| > 2^{n-2}$. Applying the induction hypothesis to $\C$ and $\D$ (using (\ref{eq:suff}) for $\C$ and (\ref{eq:suff-large}) for $\D$), we obtain $f(\C) \leq 2F(|\C|)$ and $f(\D) \leq 2F(|\D|) + 2|\D| - 2^{n-1}$; substituting these two inequalities into (\ref{eq:master}) yields
\begin{align*}
f(\A) & \leq 2F(|\C|) + 2F(|\D|) + 2|\D| - 2^{n-1} + 2|\C \cap \D| + 2|\C \cap \overline{\D}| - |\C \cap \overline{\C}| - |\D \cap \overline{\D}|\\
& = 2F(|\C|+ |\D|) - \big(2F(|\C|+|\D|) - 2F(|\C|) - 2F(|\D|) - 2|\D| + 2 \cdot 2^{n-2}\big)\\
&\quad +2|\C \cap \D| + 2|\C \cap \overline{\D}| - |\C \cap \overline{\C}| - |\D \cap \overline{\D}|\\
& \leq 2F(|\A|) + 2|\C \cap \D| + 2|\C \cap \overline{\D}| - |\C \cap \overline{\C}| - |\D \cap \overline{\D}| - 2|\C|\\
& \leq 2F(|\A|),
\end{align*}
where the second inequality uses Lemma \ref{lemma:hart-large}, applied with $x = |\C|$ and $y = |\D|$, and with $n-1$ in place of $n$, and the third inequality uses Lemma \ref{lemma:tech-ant}. This completes the induction step in Case 2, proving the theorem.

\section{Conclusion}
We feel that our proof of Theorem \ref{thm:suff} (and therefore of Theorem \ref{thm:ant}) is somewhat delicate, as it relies on the fact that, in the inductive step, the terms involving $F$ can be dealt with using the fortunate properties of the function $F$ (in Lemmas \ref{lemma:hart} and \ref{lemma:hart-large}), and the other terms can be dealt with using the elementary inequality in Lemma \ref{lemma:tech-ant}. We also note that there is a nested sequence of families (with one family of every possible size), each of which is extremal for Theorem \ref{thm:ant}. In contrast, the (conjectural) extremal families in Conjecture \ref{conj:proj} do not have this `nested' property. Hence, perhaps unfortunately, we feel that Theorem \ref{thm:ant}, and our proof thereof, may shed only a limited amount of light on Conjecture \ref{conj:proj}.

\subsubsection*{Acknowledgements}
We would like to thank an anonymous referee for suggesting the above proof of Lemma \ref{lemma:balanced}, which is simpler than our original argument.

\end{document}